\newtheorem{theorem}{Theorem}[section]
\newtheorem{proposition}[theorem]{Proposition}
\newtheorem{corollary}[theorem]{Corollary}
\theoremstyle{definition}
\newtheorem{counterexample}[theorem]{Counterexample}
\newtheorem{Remark}[theorem]{Remark}
\newcommand{\B}{\mathbb{B}}
\newcommand{\C}{\mathbb{C}}
\newcommand{\D}{\mathbb{D}}
\newcommand{\R}{\mathbb{R}} 
\newcommand{\T}{\mathbb{T}}
\newcommand{\PSH}{\operatorname{PSH}}
\newcommand{\USC}{\operatorname{USC}}
\newcommand{\Id}{\operatorname{Id}}
\title{ Boundary relative extremal functions}
\author{Ibrahim K. Djire }
\address{Jagiellonian University, Department of Mathematics}
\email{Ibrahim.Djire@im.uj.edu.pl}
\author{ Jan Wiegerinck  }
\address{Korteweg--de Vries Institute, Universiteit van Amsterdam, Science Park  105-107, Amsterdam} 
\email{J.J.O.O.Wiegerinck@uva.nl}
\keywords{ Plurisubharmonic functions, Jensen measure, Duality theorem, Smooth boundary, normal inward, radial limit.}
 \subjclass[2010]{32U05}
\begin{document}
\begin{abstract}

We study the relation between certain alternative definitions of the boundary relative extremal function.
   For various domains we give an affirmative answer to the question of Sadullaev, \cite{Sa}, whether these extremal functions are equal. 
\end{abstract}
\maketitle

\section{Introduction}
Let $D\subset \C^n$ be a smoothly bounded domain, $A\subset \partial D$, and let $\PSH(D)^-$ stand for the family of non-positive plurisubharmonic functions on $D$. For $u\in\PSH(D)^-$ as usual
\[u^*(z)=\limsup_{\zeta\to z,\zeta\in D} u(\zeta)\qquad (z\in \overline D).\]
Sadullaev studied the first three of the following boundary extremal functions.  For $z\in D$ consider

\begin{enumerate}
\item  $\omega_1(z,A,D)=\omega^c(z,A,D)=\sup\{ u(z), u\in \PSH(D)^-\cap C(\overline D), u|A\leq-1\},$
\item  $\omega_2(z,A,D)=\omega(z,A,D)=\sup\{ u(z), u\in \PSH(D)^-, u^*|A\leq-1\},$
\item  $\omega_3(z,A,D)=\omega^n(z,A,D)=\sup\{u(z), u\in \PSH(D)^-, \limsup_{z\rightarrow\zeta, z\in n_{\zeta}} u(z)\leq-1 \mbox{ for  } \zeta\in A\}$ where $n_{\zeta}$ is the inward  normal to $\partial D$ at $\zeta$,
\item  $\omega^R(z,A,D)=\sup\{u(z), u\in \PSH(D)^-, \limsup_{r\rightarrow 1^-} u(rz)\leq-1, z\in A\},$ if $D$ is strongly star shaped with respect to the origin.
\end{enumerate} 
Actually, smoothness is needed only to define $\omega^n$. 
It is clear that 
\[\omega_1(.,A,D)\leq\omega_2(.,A,D)\leq\omega_3(.,A,D).\]
This paper is motivated by the following question (Problem 27.4 in \cite{Sa}): Suppose $A\subset \partial D$ is closed, for what $i, j$ is $\omega_i^*(z,A, D)\equiv \omega_j^*(z,A, D)$?

The answer apparently depends on the geometry and  convexity properties of $D$ and the choice of the compact set $A\subset\partial D$. For instance we showed in \cite{IbJa} that Sadullaev's question has a positive answer when  $D$ is a smooth pseudoconvex Reinhardt domain and $A$ is multi-circular. The result in \cite{IbJa} exploits the relation between  relative extremal functions and convex functions in a Reinhardt domain.

In the present paper we answer in Section 3 the question affirmatively for ellipsoidal domains $D_H$, which are biholomorphic to the unit ball via a linear transformation.  Here we exploit an idea of Wikstr\"om \cite{Wi} and use Edwards' duality theorem. In Section 4 we show equality for circular sets $A$ in the boundary of circular, strongly star shaped domains $D$. We attempted to use the version of Edwards' theorem in \cite{GoPePo} and found that their result is not correct. In the appendix we give two pertaining counterexamples.

 We denote the open unit disc in $\C$ by $\D$, its boundary by $\T$ and the unit ball in $\C^n$ ($n\geq 2$) by $\B$.
 Some basic properties of the boundary relative extremal function are given in \cite{IbJa, EdSig, Edi, Po, Sa} (\cite{EdSig} appeared as \cite{EdSi} but the preprint is more relevant). 
  Depending on the way the boundary is approached, plurisubharmonic function may have different boundary values.   Wikstr\"om considered the compact set $A=\T\times\{0\}$ and the function $u\in \PSH(\B)$:
\[u(z)= \log\frac{|z_2|^2}{1-|z_1|^2}.\]
He showed that $u^*=0$ on $A$. The radial limit of $u$, $u^R=-\infty$ on $A$ and the non-tangential limit of $u$, $u^{\alpha}=\log(1-1/2\alpha)$ on $A$ \cite[Example 5.5]{Wi}. We recall the definition of $u^{\alpha}.$ If $\alpha>1$ and $z_0\in\partial \B$ we put $$D_{\alpha}(z_0)=\{z\in\B : |1-\big\langle z, z_0\big\rangle|<\alpha(1-|z|^2)\},$$ $$u^{\alpha}(z_0)=\limsup_{z\rightarrow z_0, z\in D_{\alpha}(z_0)} u(z).$$

  \section{Notations and definitions}  Let $D=\{\rho<0\}$ be a domain in  $\C^n$ with  $C^1$-boundary and defining function $\rho$. For $z\in \overline D$ and $t\in \R$ let
 \[n(z,t)=z-t\left(\frac{\partial\rho}{\partial \overline z_1}(z),\ldots, \frac{\partial\rho}{\partial \overline z_n}(z)\right).\] 
 If $z\in\partial D$  the normal line $n_z$ passing through $z$ is parametrized by $\{n(z,t), t\in \R\}$.

 Let $u :D\rightarrow \R\cup\{-\infty\}$ be bounded from above  and $z\in \partial D$ we define  $u^n$ at $z$ as     $$u^n(z)=\limsup_{t\downarrow 0} u\circ n(z,t).$$
Extend $u^n$ to $\overline D$ by setting $u^n(z)=u(z)$ if $z\in D$. Recall that $D$ is called \emph{strongly star shaped with respect to the origin} if $r\overline D\subset D$ for $r\in ]0,1[$. If $D$ is strongly star shaped with respect to the origin, then for $z\in\partial D$  set  $u^R(z)=\limsup_{r\uparrow 1} u(zr).$ Extend $u^R$ to $\overline D$ by setting $u^R(z)=u(z)$ if $z\in D$. Throughout the paper by strongly star shaped we mean strongly star shaped with respect to the origin. Let $M(D)$ be the set of Borel probability measures  with compact support on $\overline D.$ For $z\in \overline D $ we consider four classes of positive measures
\begin{enumerate}
 \item $J_z=J_z(\overline D)=\{ \mu\in M(D) : u(z)\leq \int_{\overline D} u\, d\mu \text{ for all } u\in\PSH(D)\cap\USC(\overline D) \} $
   \item $ J^c_z=J^c_z(\overline D)=\{ \mu\in M( D) : u(z)\leq \int_{\overline D} u \, d\mu \text{ for all } u\in\PSH(D)\cap C(\overline D)\} $
\item $J^n_z=  \{ \mu \in M(D) : u^n(z)\leq \int_{ \overline D} u^n\,  d\mu \text{ for all } u\in\PSH(D), \sup_{\overline D} u^n<\infty\}    $
\item $J^R_z=  \{ \mu \in M(D) : u^R(z)\leq \int_{ \overline D} u^R\,  d\mu \text{ for all } u\in\PSH(D), \sup_{\overline D} u^R<\infty\},$ in case $D$ is strongly star shaped with respect to the origin.
\end{enumerate}
Clearly for $z\in D,$  $J^n_z, J^R_z\subset J_z\subset J_z^c$. Wikstr\"om studied these classes and proved that $J=J^c=J^R$ on $D$ if  $D$ is strongly star shaped, see \cite[Proposition 5.4]{Wi}. If $U\subset\overline D,$ $\chi_U$ denotes the characteristic function of $U.$ 


\section{Applications of Wikstr\"om's results}
We use equalities between different classes of Jensen measures to prove the equivalence of different definitions. This is done by applying Edwards' theorem to the convex cone $\PSH(D)\cap\USC(\overline D)$ and the associated Jensen measures $J_z$.

\begin{proposition}\label{Prop.1.6}
Let $D\subset\C^n$ be a bounded domain with $C^1-$boundary, $A\subset\partial D$ compact. If $J_z=J^n_z$ for all $z\in D$ then 
$$\omega(z,A,D)=\omega^n(z,A,D).$$ 
\end{proposition}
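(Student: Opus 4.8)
The plan is to use Edwards' duality theorem twice — once for the cone $\PSH(D)\cap\USC(\overline D)$ with Jensen measures $J_z$, and once for a cone adapted to the normal-limit setting — and then transfer between the two using the hypothesis $J_z=J_z^n$. Recall that for a lower semicontinuous function $g$ on $\overline D$, Edwards' theorem gives
\[
\sup\{u(z): u\in \PSH(D)\cap\USC(\overline D),\ u\le g \text{ on } \overline D\}=\inf\left\{\int_{\overline D} g\,d\mu:\ \mu\in J_z\right\}.
\]
The first step is to rewrite $\omega(z,A,D)$ as such an envelope. One direction is easy: any competitor $u$ for $\omega_1$ (continuous up to the boundary) is in particular a competitor here, but the issue is that $\omega(z,A,D)=\omega_2$ uses the weaker condition $u^*|A\le -1$ rather than $u\le -1$ on $\overline D$. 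Following Wikström's idea, one replaces $A$ by a shrinking family of open neighborhoods: for $g=-\chi_{U}$ with $U$ an open set in $\overline D$ containing $A$, the envelope $\sup\{u(z):u\in\PSH(D)\cap\USC(\overline D),\ u\le -\chi_U\}$ equals, by Edwards, $-\sup\{\mu(U):\mu\in J_z\}$, and letting $U\downarrow A$ should recover $\omega(z,A,D)=-\sup\{\mu(A):\mu\in J_z\}$ after checking that the semicontinuity of $\mu\mapsto\mu(U)$ matches. (One must be slightly careful: $\mu(U)$ for fixed open $U$ is lower semicontinuous in $\mu$, while $\mu(A)$ for compact $A$ is upper semicontinuous, so the regularization/limiting argument over $U\downarrow A$ is where attention is needed.)

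The second step is to produce the analogous representation for $\omega^n(z,A,D)$. A function $u\in\PSH(D)$ with $\sup u^n<\infty$ and $u^n|A\le -1$ is, by definition of $J_z^n$, dominated along any measure $\mu\in J_z^n$: $u^n(z)\le\int u^n\,d\mu$. So restricting attention to such $u$ that are moreover bounded above, one gets $u(z)=u^n(z)\le\int_{\overline D} u^n\,d\mu\le \mu(\overline D\setminus U)\cdot 0 + \mu(U)\cdot(-1)=-\mu(U)$ whenever $u^n\le -\chi_U$ and $A\subset U$; hence $\omega^n(z,A,D)\le -\sup\{\mu(A):\mu\in J_z^n\}$ after the same limiting argument. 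For the reverse inequality one needs, for each $\mu$ and each $\varepsilon$, a plurisubharmonic function realizing the value $-\mu(A)$ approximately along normals; here one again invokes Edwards for the cone $\{u^n:u\in\PSH(D),\ \sup u^n<\infty\}$ against the class $J_z^n$, which is exactly the setup in Wikström's framework, to get $\omega^n(z,A,D)=-\sup\{\mu(A):\mu\in J_z^n\}$.

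With both extremal functions expressed as $-\sup_{\mu\in J_z}\mu(A)$ and $-\sup_{\mu\in J_z^n}\mu(A)$ respectively, the hypothesis $J_z=J_z^n$ for all $z\in D$ finishes the proof immediately. The main obstacle I expect is the careful handling of the boundary-value conditions: relating the pointwise constraint $u^*|A\le-1$ (resp. $u^n|A\le-1$) to the global majorant $u\le g$ needed to apply Edwards, which forces the passage through open neighborhoods $U\downarrow A$ and the verification that the infima over $J_z$ of $\int g\,d\mu$ converge correctly — in particular that no mass escapes and that the upper/lower semicontinuity mismatch noted above does not obstruct the limit. A secondary technical point is ensuring that in the definition of $\omega^n$ one may restrict to bounded-above $u$ without changing the sup (truncation $\max(u,-N)$ is plurisubharmonic and its normal limsup is $\max(u^n,-N)$, which still lies below $-1$ on $A$), so that the measures in $J_z^n$ genuinely control the competitors. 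Once these semicontinuity and truncation issues are dispatched, the identity $J_z=J_z^n$ does the rest.
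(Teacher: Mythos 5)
There is a genuine gap in one step of your outline: the ``reverse inequality'' for $\omega^n$, where you propose to invoke Edwards' theorem for the family $\{u^n : u\in\PSH(D),\ \sup_{\overline D}u^n<\infty\}$ against the class $J_z^n$. Edwards' theorem requires a convex cone of upper semicontinuous functions, and this family is not a convex cone: in general $(u+v)^n\neq u^n+v^n$, because the limsups along the inward normal need not be realized along a common sequence. (The paper makes exactly this point in the remark following the proposition, citing Wikstr\"om.) So the identity $\omega^n(z,A,D)=-\sup\{\mu(A):\mu\in J_z^n\}$ is not established by your argument. Fortunately it is also not needed: the only direction you actually use is $\omega^n(z,A,D)\le\inf\{\int_{\overline D}(-\chi_A)\,d\mu:\mu\in J_z^n\}$, and that follows directly from the definition of $J_z^n$, since any competitor $u$ for $\omega^n$ is already non-positive (so $\sup_{\overline D}u^n\le 0$ and your truncation step is moot) and satisfies $u^n\le-\chi_A$ on $\overline D$, whence $u(z)=u^n(z)\le\int u^n\,d\mu\le\int(-\chi_A)\,d\mu$ for every $\mu\in J_z^n$. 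The class $J^n$ should be used only to obtain this one inequality, not a duality.

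The detour through open neighborhoods $U\downarrow A$ on the $\omega$ side is likewise unnecessary, and it is what creates the upper/lower semicontinuity mismatch you worry about. Since $A$ is compact, $g=-\chi_A$ is already lower semicontinuous on the compact set $\overline D$, so Edwards' theorem applies to it directly for the honest convex cone $\PSH(D)\cap\USC(\overline D)$; moreover you do not need the full representation $\omega(z,A,D)=-\sup\{\mu(A):\mu\in J_z\}$, only the trivial half $\sup\{v(z):v\in\PSH(D)\cap\USC(\overline D),\ v\le-\chi_A\}\le\omega(z,A,D)$, which holds because every such $v$ restricts to a competitor for $\omega$ (upper semicontinuity on $\overline D$ gives $v^*\le v\le-1$ on $A$). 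Chaining the pieces, for any competitor $u$ of $\omega^n$ and $z\in D$ one gets $u(z)\le\inf_{J_z^n}\int(-\chi_A)\,d\mu=\inf_{J_z}\int(-\chi_A)\,d\mu=\sup\{v(z):v\in\PSH(D)\cap\USC(\overline D),\ v\le-\chi_A\}\le\omega(z,A,D)$, where the middle equality is the hypothesis $J_z=J_z^n$ followed by Edwards. This one-sided chain is precisely the paper's proof; your two-sided representation formulas introduce the semicontinuity difficulties you flag and, in the $\omega^n$ case, a step that cannot be carried out as stated.
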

\begin{proof}
 We know that  $\omega(.,A,D)\leq\omega^n(.,A,D)$. Let us  prove that $\omega^n(.,A,D)\leq\omega(.,A,D)$. Let $u$ be in the family defining $\omega^n$. 

 Set $g=-\chi_A.$ Note that $u^n\leq g$ on $\overline D.$ For $z\in D$ one has $$u^n(z)\leq\inf\left\{\int g\, d\mu, \mu\in J_z^n\right\}=\inf\left\{\int g \,d\mu, \mu\in J_z\right\},\,\mbox{ because } J_z=J_z^n.$$
Because $g$ is lower semicontinuous on $\overline D$, Edwards' theorem (Corol. 2.2 in \cite{Wi}) gives 
$$u^n(z)\leq\inf\left\{\int g\, d\mu, \mu\in J_z\right\}=\sup\left\{v(z), v\in\PSH(D)\cap\USC(\overline D), v\leq g\right\}\leq\omega(z,A,D).$$
As $u$ was taken arbitrarily in the family defining $\omega^n$ we infer that $\omega^n(z,A,D)\leq \omega(z,A,D)$ for all $z\in D.$ Thus $\omega(.,A,D)=\omega^n(.,A,D).$
\end{proof}
\begin{Remark} Notice that the dual of $J^n$ i.e $\{u^n, u\in\PSH(D), \sup_D u<+\infty\}$ is not a convex cone, indeed if $u$ and $v$ are bounded plurisubharmonic functions in $D$ we do not have in general $(u+v)^n=u^n+v^n$ in $\overline D$, cf.~\cite[Definition 2.2 and below]{Wi2}.
We use the class $J^n$ only to obtain an inequality. 
\end{Remark}
The  proof above applies to the next two propositions.
\begin{proposition}\label{Prop.3.9}
Let $D\subset\C^n$ be a bounded strongly star shaped domain and $A\subset\partial D$ compact. If $J_z=J^R_z$ for all $z\in D$ then 
$$\omega(z,A,D)=\omega^R(z,A,D).$$ 
\end{proposition}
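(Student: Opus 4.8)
The plan is to mimic the proof of Proposition~\ref{Prop.1.6} verbatim, since the statement says ``the proof above applies.'' First I would note the trivial inequality $\omega(\,\cdot\,,A,D)\leq\omega^R(\,\cdot\,,A,D)$, which follows because any competitor $u$ for $\omega$ (with $u^*|A\leq-1$) automatically satisfies $\limsup_{r\uparrow 1}u(rz)\leq u^*(z)\leq -1$ for $z\in A$, so it also competes for $\omega^R$. The work is in the reverse inequality $\omega^R\leq\omega$.

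For that, fix $z\in D$ and let $u\in\PSH(D)^-$ be an arbitrary member of the family defining $\omega^R$, so $u^R(z)\leq-1$ for $z\in A$ and (after extending $u^R$ to $\overline D$ by $u^R=u$ on $D$) we have $u$ bounded above, hence $\sup_{\overline D}u^R<\infty$. Set $g=-\chi_A$; this is lower semicontinuous on $\overline D$ because $A$ is compact, hence closed. Since $u^R\leq g$ on $\overline D$ — on $A$ because $u^R\leq-1=g$, and off $A$ because $u^R\leq 0=g$ — and since for $\mu\in J^R_z$ the defining inequality gives $u^R(z)\leq\int u^R\,d\mu\leq\int g\,d\mu$, we obtain
\[
u(z)=u^R(z)\leq\inf\Bigl\{\int g\,d\mu:\mu\in J^R_z\Bigr\}=\inf\Bigl\{\int g\,d\mu:\mu\in J_z\Bigr\},
\]
the last equality being the hypothesis $J_z=J^R_z$. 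Now Edwards' theorem (Corollary~2.2 in \cite{Wi}), applied to the convex cone $\PSH(D)\cap\USC(\overline D)$ and the lower semicontinuous function $g$, turns this infimum into a supremum:
\[
u(z)\leq\inf\Bigl\{\int g\,d\mu:\mu\in J_z\Bigr\}=\sup\bigl\{v(z):v\in\PSH(D)\cap\USC(\overline D),\ v\leq g\bigr\}\leq\omega(z,A,D),
\]
since any such $v$ satisfies $v\leq g\leq 0$ and $v^*|A\leq v|A\leq -1$ (using $v\in\USC(\overline D)$ so $v^*=v$ there), i.e. $v$ competes for $\omega$. As $u$ was arbitrary in the family defining $\omega^R$, we conclude $\omega^R(z,A,D)\leq\omega(z,A,D)$ for all $z\in D$, and combined with the reverse inequality this gives $\omega(\,\cdot\,,A,D)=\omega^R(\,\cdot\,,A,D)$.

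There is essentially no obstacle here beyond bookkeeping: the only points requiring a moment's care are that $g=-\chi_A$ is lower semicontinuous (needs $A$ closed, which holds since $A$ is compact in $\partial D$), that the extension convention for $u^R$ makes $u^R\leq g$ hold on all of $\overline D$ rather than just on $D\cup A$, and — as the Remark after Proposition~\ref{Prop.1.6} flags — that we are only using the class $J^R$ to produce an inequality, never claiming its ``dual'' $\{u^R:u\in\PSH(D),\ \sup_D u<\infty\}$ is a convex cone. The strongly-star-shaped hypothesis enters only to make $u^R$ well-defined (so that $r\overline D\subset D$ for $r\in\,]0,1[\,$ and the radial approach stays inside $D$); Edwards' theorem itself is applied to the genuine convex cone $\PSH(D)\cap\USC(\overline D)$ exactly as in the previous proposition.
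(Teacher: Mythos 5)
Your proof is correct and is exactly the adaptation the paper intends: the authors give the argument only for $\omega^n$ (Proposition~\ref{Prop.1.6}) and state that ``the proof above applies'' to this proposition, and your version carries out that adaptation faithfully, including the points that matter ($u^R\leq g=-\chi_A$ on $\overline D$, the hypothesis $J_z=J^R_z$ to switch classes of measures, and Edwards' theorem for the cone $\PSH(D)\cap\USC(\overline D)$). No discrepancies with the paper's approach.
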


\begin{proposition}\label{Prop.1.4}
Let $D\subset\C^n$ be a bounded domain and $A\subset\partial D$ compact. If $J_z=J_z^c$ for all $z\in D$ then $\omega(z,A,D)=\omega^c(z,A,D)$ for $z\in D$.
\end{proposition}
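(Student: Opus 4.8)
The plan is to repeat the argument used for Proposition~\ref{Prop.1.6}, this time applying Edwards' duality theorem to the convex cone $\PSH(D)\cap C(\overline D)$ together with its associated class of Jensen measures $J^c_z$. The inequality $\omega^c(\cdot,A,D)\le\omega(\cdot,A,D)$ is immediate, since any $u\in\PSH(D)^-\cap C(\overline D)$ with $u|_A\le-1$ satisfies $u^*=u$ on $\overline D$ and is therefore also admissible in the definition of $\omega$. Hence everything reduces to proving $\omega(z,A,D)\le\omega^c(z,A,D)$ for $z\in D$.

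First I would fix $u$ in the family defining $\omega$, so that $u\in\PSH(D)^-$ and $u^*|_A\le-1$, and put $g=-\chi_A$. As $A$ is compact, $g$ is lower semicontinuous and bounded on $\overline D$. Since $u$ is upper semicontinuous on $D$ we have $u^*=u$ on $D$, whereas on all of $\overline D$ the function $u^*$ is upper semicontinuous, is plurisubharmonic on $D$, and satisfies $u^*\le g$ (indeed $u^*\le 0$ because $u\le0$, and $u^*\le-1$ on $A$). Thus $u^*\in\PSH(D)\cap\USC(\overline D)$ is an admissible test function for the measures of $J_z$, and for every $z\in D$ and every $\mu\in J_z$
\[
u(z)=u^*(z)\le\int_{\overline D}u^*\,d\mu\le\int_{\overline D}g\,d\mu .
\]
Taking the infimum over $\mu$ and using the hypothesis $J_z=J^c_z$ gives
\[
u(z)\le\inf\Big\{\int_{\overline D}g\,d\mu:\mu\in J_z\Big\}=\inf\Big\{\int_{\overline D}g\,d\mu:\mu\in J^c_z\Big\}.
\]

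Now I would invoke Edwards' theorem (Corollary~2.2 in \cite{Wi}) for the convex cone $\PSH(D)\cap C(\overline D)$ --- a cone of continuous functions on the compact set $\overline D$ which contains the constants --- and the lower semicontinuous function $g$, to obtain
\[
\inf\Big\{\int_{\overline D}g\,d\mu:\mu\in J^c_z\Big\}=\sup\{v(z):v\in\PSH(D)\cap C(\overline D),\ v\le g\}.
\]
Any competitor $v$ here satisfies $v\le0$ on $\overline D$ and $v|_A\le-1$, hence belongs to the family defining $\omega^c$, so the right-hand supremum is at most $\omega^c(z,A,D)$. Combining the displays yields $u(z)\le\omega^c(z,A,D)$ for all $z\in D$; since $u$ was an arbitrary competitor for $\omega$ we conclude $\omega(\cdot,A,D)\le\omega^c(\cdot,A,D)$, and together with the trivial reverse inequality this proves the proposition.

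The bulk of the work --- the semicontinuity bookkeeping and the replacement of $u$ by $u^*$, which is harmless precisely because plurisubharmonic functions are upper semicontinuous --- is routine. The one point genuinely requiring care is that the version of Edwards' theorem being invoked really does apply to the pair $(\PSH(D)\cap C(\overline D),\,J^c_z)$; in view of the cautionary remarks about \cite{GoPePo} this is not a pure formality, but here we are in the classical setting of a convex cone of continuous functions on a compact space containing the constants, so Corollary~2.2 of \cite{Wi} applies directly, and the entire content of the statement is carried by the hypothesis $J_z=J^c_z$.
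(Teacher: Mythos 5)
Your proof is correct and is exactly the adaptation of the argument for Proposition~\ref{Prop.1.6} that the paper intends when it writes ``the proof above applies to the next two propositions'': one replaces $u^n$ by $u^*$ and $J_z^n$ by $J_z$, uses the hypothesis $J_z=J_z^c$, and then applies Edwards' theorem to the cone $\PSH(D)\cap C(\overline D)$ with the measures $J^c_z$ on the compact set $\overline D$. No gaps; your careful check that $u^*\in\PSH(D)\cap\USC(\overline D)$ is a legitimate test function for $J_z$ is precisely the point that makes the adaptation work.
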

For $z\in \overline D$ define $$J^*_z=J^*_z(\overline D)=\{ \mu\in M(D) : u^*(z)\leq \int_{\overline D} u^*\,  d\mu \text{ for all } u\in\PSH(D),\, \sup_D u<+\infty \}. $$
 
Note that in \cite{Wi} the author worked with $J^*$ but this class does not represent a convex cone, see \cite{DiWi} for details.
Here we work with  $J$ instead of $J^*$  because of two reasons: firstly $\PSH(D)\cap \USC(\overline D)$  is a convex cone so Edwards' theorem can be applied, secondly for $z\in D$ and $g$ lower semicontinuous we have $$\inf\left\{\int g\, d\mu, \mu\in J_z^*\right\}=\inf\left\{\int g\, d\mu, \mu\in J_z\right\},$$ thus the results in \cite{Wi} remain valid for  $J_z$ if $z$ is an interior point.

\begin{corollary}
If $D$ is B-regular or if $D$ is strongly star shaped with respect to the origin or if $D$ is a polydisc then  $\omega(z,A,D)=\omega^c(z,A,D)$ for $z\in D.$
\end{corollary}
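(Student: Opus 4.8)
The plan is to reduce everything to Proposition~\ref{Prop.1.4}: in each of the three cases it suffices to check that $J_z = J^c_z$ for every $z \in D$, for then $\omega(z,A,D) = \omega^c(z,A,D)$ follows at once. Since $\PSH(D) \cap C(\overline D) \subset \PSH(D) \cap \USC(\overline D)$, the inclusion $J_z \subseteq J^c_z$ is automatic, so only $J^c_z \subseteq J_z$ has to be proved.

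When $D$ is strongly star shaped with respect to the origin this is exactly Wikstr\"om's \cite[Proposition~5.4]{Wi}, which in fact gives $J = J^c = J^R$ on $D$. The polydisc $\D^n$ is strongly star shaped with respect to the origin, because $r\,\overline{\D^n} = (r\overline{\D})^n \subset \D^n$ for $r \in {]0,1[}$; hence the polydisc case is a particular instance of the strongly star shaped one. (One may also argue by hand: for bounded $u \in \PSH(\D^n)$ the dilates $w \mapsto u(rw)$, $r \uparrow 1$, belong to $\PSH(\D^n) \cap C(\overline{\D^n})$, which is the engine of the dilation argument behind Proposition~5.4 in \cite{Wi}.)

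When $D$ is B-regular I would argue by approximation from above. Fix $z \in D$, $\mu \in J^c_z$, and a bounded $u \in \PSH(D) \cap \USC(\overline D)$; the aim is $u(z) \le \int u \, d\mu$. As $u$ is upper semicontinuous on the compact metric space $\overline D$, there exist $h_j \in C(\overline D)$ with $h_j \downarrow u$ pointwise on $\overline D$. Let $\hat h_j := \bigl(\sup\{\, v \in \PSH(D) : v \le h_j \text{ on } \overline D \,\}\bigr)^{*}$ be the plurisubharmonic upper envelope of $h_j$. By the continuous solvability of the Dirichlet problem on a B-regular domain (Sibony), $\hat h_j$ is continuous up to $\overline D$, so $\hat h_j \in \PSH(D) \cap C(\overline D)$; moreover $u \le \hat h_j \le h_j$ on $\overline D$, whence $\hat h_j \downarrow u$ pointwise on $\overline D$. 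Now $u(z) \le \hat h_j(z) \le \int \hat h_j \, d\mu$ for every $j$, because $\mu \in J^c_z$, and the right-hand side tends to $\int u \, d\mu$ as $j \to \infty$ by bounded convergence; hence $u(z) \le \int u \, d\mu$. This shows $J^c_z \subseteq J_z$, so $J^c_z = J_z$ on $D$.

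I expect the B-regular case to be the only genuinely delicate point, and within it the crux is the continuity up to the boundary of the envelopes $\hat h_j$ --- precisely the property that B-regularity is designed to provide; the rest is bookkeeping, and the strongly star shaped and polydisc cases are immediate from the cited results. Feeding $J_z = J^c_z$ (valid in all three situations) into Proposition~\ref{Prop.1.4} completes the proof.
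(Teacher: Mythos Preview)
Your approach is essentially the same as the paper's: both reduce to $J_z=J_z^c$ and invoke Proposition~\ref{Prop.1.4}. The paper simply cites Wikstr\"om \cite[Thm.~4.10, Thm.~4.11, Cor.~4.3]{Wi} for all three cases rather than sketching the envelope argument in the B-regular case, and your observation that the polydisc is already strongly star shaped is a correct shortcut.
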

\begin{proof}
In these domains $J_z=J_z^c$  for $z\in D$ see \cite[Thm. 4.10, Thm. 4.11, Cor. 4.3 ]{Wi}. Then Proposition \ref{Prop.1.4} gives the result.
\end{proof}

For $H$ a positive definite hermitian $n\times n$-matrix, let $\rho_H(z)=\overline z^T Hz$ on $\C^n$ and set $D_H=\{z\in \C^n :\rho_H(z)<1 \}$.

\begin{proposition}\label{Prop.1.3}
On $D_H$ we have $J^n_z=J^c_z=J_z$   for all $z\in D_H$.
\end{proposition}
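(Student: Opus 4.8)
The plan is to prove the chain $J^c_z\subseteq J^n_z\subseteq J_z\subseteq J^c_z$; the last two inclusions are the general facts recorded in Section~2 (for $v\in\PSH(D_H)\cap\USC(\overline{D_H})$ one has $v^n\le v$ on $\overline{D_H}$ by upper semicontinuity, so $J^n_z\subseteq J_z$, and $J_z\subseteq J^c_z$ is clear), so all the content lies in $J^c_z\subseteq J^n_z$.

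The device is a family of linear dilations adapted to $\rho_H$. For $s>0$ set $T_s(w)=(I-sH)w$. Since $\rho_H(w)=\overline w^THw$, a direct computation gives
\[\rho_H(T_sw)=\overline w^THw-2s\,\overline w^TH^2w+s^2\,\overline w^TH^3w\le(1-2s\lambda_{\min}+s^2\lambda_{\max}^2)\,\rho_H(w),\]
with $\lambda_{\min},\lambda_{\max}>0$ the extreme eigenvalues of $H$ (using $\lambda_{\min}H\le H^2$ and $H^3\le\lambda_{\max}^2H$ as Hermitian forms); hence for all small $s>0$ the map $T_s$ sends $\overline{D_H}$ into a compact subset of $D_H$. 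The reason this family is the right one is that on $\partial D_H$ it runs along the inward normal: $T_s(\zeta)=(I-sH)\zeta=n(\zeta,s)$.

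Now fix $\mu\in J^c_z$ and $u\in\PSH(D_H)$ with $\sup_{D_H}u<\infty$; we must show $u(z)\le\int_{\overline{D_H}}u^n\,d\mu$. For $s\in(0,s_0)$ the function $u\circ T_s$ is plurisubharmonic and bounded above on the open neighbourhood $T_s^{-1}(D_H)$ of $\overline{D_H}$; regularising it by convolution, applying the defining inequality of $J^c_z$ to the resulting smooth plurisubharmonic functions, and letting the regularisation parameter go to $0$ (monotone convergence) gives
\[u(T_sz)\le\int_{\overline{D_H}}u(T_sw)\,d\mu(w)\qquad(0<s<s_0).\]
Letting $s\downarrow0$: on the right the integrands are uniformly bounded above, so the reverse Fatou lemma together with the observations that $\limsup_{s\downarrow0}u(T_sw)=u^n(w)$ for $w\in\partial D_H$ (this is the definition of $u^n$, since $T_s(\zeta)=n(\zeta,s)$) and $\limsup_{s\downarrow0}u(T_sw)\le u(w)=u^n(w)$ for $w\in D_H$ (upper semicontinuity) bounds the right side by $\int_{\overline{D_H}}u^n\,d\mu$. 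On the left, $\limsup_{s\downarrow0}u(T_sz)=u(z)$ (the case $z=0$ being trivial): restricting $u$ to the complex line through $z$ in the direction $Hz$ turns $s\mapsto u(T_sz)$ into the restriction of a one-variable subharmonic function to a segment issuing from $z$, along which its upper limit equals its value at $z$. Combining these, $u(z)\le\int_{\overline{D_H}}u^n\,d\mu$, i.e. $\mu\in J^n_z$.

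The step carrying the weight is the identity $u(z)=\limsup_{s\downarrow0}u(T_sz)$ at the interior point $z$: for a general subharmonic function on $\R^{2n}$ the upper limit along a line segment can be strictly smaller than the endpoint value when $n\ge2$, and what makes it hold here is that $T_sz$ moves inside a complex line, in which a segment is non-thin at its endpoint — equivalently, one uses fine continuity of subharmonic functions of one variable. The remaining steps (the dilation estimate, the regularisation, the reverse Fatou passage) are routine; note that the argument simultaneously reproves $J_z=J^c_z$ on $D_H$.
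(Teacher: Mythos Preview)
Your proof is correct and follows essentially the same route as the paper: both use the observation that for $\rho_H$ the normal map $n(\cdot,s)$ is the linear (hence holomorphic) contraction $(I-sH)$ sending $\overline{D_H}$ into $D_H$, then regularise $u\circ n(\cdot,s)$ to feed it into $J^c_z$, pass to the limit via Fatou, and invoke non-thinness of a real segment inside a complex line to recover $u(z)$ on the left. Your write-up is in fact slightly more careful than the paper's (you restrict to small $s$ rather than $s\in(0,1)$, and you test against the full class $u\in\PSH(D_H)$ with $\sup u<\infty$ required by the definition of $J^n_z$, whereas the paper only writes $u\in\PSH(D)\cap\USC(\overline D)$).
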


\begin{proof} Set $D=D_H.$ Let $z\in D$. Because for $u\in\PSH(D)\cap C(\overline D)$, $u=u^n$ on $\overline D$, we have $J^n_z\subset J^c_z$. Let $\mu\in J^c_z$  and $u\in\PSH(D)\cap\USC(\overline D)$.   Let $0<r<1$. Observe that in case of $D_H$ the map  $n(.,r)$ is holomorphic and maps $\overline D$ into $D$. Set $u_r(y)=u\circ n(y,r)$, $y\in\overline D$. Then $u_r$ is plurisubharmonic in a neighborhood of $\overline D$, hence  $u_r$ can be approximated monotonically from above by functions in $\PSH(D)\cap C(\overline D)$. By the monotone convergence theorem $u_r(z)\leq\int u_r\,d\mu$ for all $r\in ]0,1[$.
By Fatou's lemma  $$\limsup_{r \rightarrow 0} u_r(z)\leq\limsup_{r\rightarrow 0}\int_{\overline D}u_r(y)\,d\mu.$$
 For $y\in D$ one has $\limsup_{r\rightarrow 0} u_r(y)=u^n(y)$.  Because the set $[0,1]$ is not thin at $0$, see Theorem 2.7.2 in \cite{Ki}, we have $$u^n(z)=u(z)=\limsup_{r \rightarrow 0} u_r(z)\leq\int \limsup_{r\rightarrow 0} u_r(y)\,d\mu\leq  \int_{\overline D}  u^n(y)\,d\mu.$$
 Thus $\mu\in J^n_z$ it follows that $J^c_z\subset J^n_z$. Hence $J^c_z=J^n_z\subset J_z\subset J_z^c.$
 \end{proof}
 
The unit ball, i.e. the case where $H=\Id$, was done in \cite{Wi}. Our proof is a slight modification of Wikstr\"om's.
\begin{theorem}\label{corol.3.9} 
For all $z\in D_H$ one has $\omega(z,A,D_H)=\omega^n(z,A,D_H) =\omega^R(z,A,D_H)  =\omega^c(z,A,D_H) $ for all $A\subset\partial D_H$ compact.
\end{theorem}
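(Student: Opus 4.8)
The plan is to assemble Theorem~\ref{corol.3.9} from the machinery already in place, so that essentially no new analysis is required beyond combining Proposition~\ref{Prop.1.3} with the transfer Propositions~\ref{Prop.1.6}, \ref{Prop.3.9} and~\ref{Prop.1.4}. First I would record the chain of trivial inequalities valid on any bounded domain with $C^1$-boundary that is strongly star shaped, namely
\[
\omega^c(z,A,D_H)\le \omega(z,A,D_H)\le \omega^n(z,A,D_H)
\]
(the first two are stated in the introduction), together with the analogous bound $\omega(z,A,D_H)\le \omega^R(z,A,D_H)$, which follows because any competitor for $\omega$ has $u^*|A\le-1$ and hence its radial $\limsup$ on $A$ is also $\le-1$. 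Then it suffices to prove the reverse inequalities, i.e.\ that all four functions coincide with $\omega(z,A,D_H)$.

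The key input is Proposition~\ref{Prop.1.3}, which gives $J_z=J_z^c=J_z^n$ for every $z\in D_H$. I would also note that $D_H$ is strongly star shaped with respect to the origin (since $\rho_H$ is a Hermitian form, $r\overline{D_H}\subset D_H$ for $0<r<1$), so that $\omega^R$ and $J^R_z$ are defined; and I would check that $J_z=J_z^R$ as well. For the latter the cleanest route is Wikstr\"om's result \cite[Proposition 5.4]{Wi} (recalled in Section~2), which gives $J_z=J_z^c=J_z^R$ on $D$ for any strongly star shaped $D$; combined with Proposition~\ref{Prop.1.3} this yields $J_z=J_z^c=J_z^n=J_z^R$ for all $z\in D_H$. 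With these equalities of Jensen classes established, I would apply Proposition~\ref{Prop.1.4} to conclude $\omega(z,A,D_H)=\omega^c(z,A,D_H)$, Proposition~\ref{Prop.1.6} to conclude $\omega(z,A,D_H)=\omega^n(z,A,D_H)$, and Proposition~\ref{Prop.3.9} to conclude $\omega(z,A,D_H)=\omega^R(z,A,D_H)$, for all $z\in D_H$ and all compact $A\subset\partial D_H$.

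The main obstacle, such as it is, is bookkeeping rather than mathematics: making sure that the hypotheses of each transfer proposition are genuinely met for $D_H$ — in particular that $D_H$ has $C^1$-boundary (true, since $\nabla\rho_H\ne0$ on $\{\rho_H=1\}$ as $H$ is positive definite) so Proposition~\ref{Prop.1.6} applies, and that $D_H$ is strongly star shaped so Proposition~\ref{Prop.3.9} applies. One should also remember that the transfer propositions only assert equality at interior points $z\in D_H$, which is exactly what the theorem claims. A minor point worth a sentence is that the equality $J_z=J_z^R$ could alternatively be derived directly, in the style of the proof of Proposition~\ref{Prop.1.3}, by replacing the holomorphic normal retraction $n(\cdot,r)$ with the dilation $y\mapsto ry$ (which also maps $\overline{D_H}$ into $D_H$ and is holomorphic), using that $[0,1]$ is non-thin at $1$; but invoking \cite[Proposition 5.4]{Wi} is shorter. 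Once the four Jensen classes are identified, the theorem follows immediately by citing the four propositions, so I would keep the written proof to a few lines.
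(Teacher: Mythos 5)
Your proposal is correct and follows essentially the same route as the paper: Proposition~\ref{Prop.1.3} gives $J_z=J_z^c=J_z^n$, Wikstr\"om's \cite[Proposition 5.4]{Wi} gives $J_z=J_z^R$ since $D_H$ is strongly star shaped, and the transfer Propositions~\ref{Prop.1.6}, \ref{Prop.3.9}, \ref{Prop.1.4} then yield all four equalities. The extra bookkeeping (the trivial inequality chain and the verification that $D_H$ has $C^1$-boundary) is harmless but not needed beyond what the paper itself records.
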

\begin{proof} 
By Proposition \ref{Prop.1.3}  $J^c=J^n=J$  and  by Proposition \ref{Prop.1.6} and Proposition \ref{Prop.1.4}  $\omega^c=\omega^n=\omega$. As $D_H$ is strongly star shaped with respect to the origin, $J=J^R$ see Prop.\,5.4 in \cite{Wi} and  by  Proposition \ref{Prop.3.9} above, the equality  $\omega=\omega^R$ follows.
\end{proof}
\section{Circular sets}
Our goal in this section is to generalize Theorem 2.11 in \cite{IbJa} and solve Sadullaev's problem for circular sets in circular, strongly star shaped, (hence balanced) domains.
 \begin{theorem}
 Let $D$ be a bounded smooth circular domain that is strongly star shaped with respect to the origin  and  let $A\subset\partial D$ be a circular compact set. Then  
 \[\omega^n(.,A,D)\le\omega^R(.,A,D)=\omega^c(.,A,D).\]
 In particular, 
 \[ \omega_1(z,A,D)=\omega_2(z,A,D)=\omega_3(z,A,D).\]
 \end{theorem}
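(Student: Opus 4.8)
The plan is to exploit the circular symmetry to reduce everything to Jensen measures, exactly as in Section 3, but now the ambient domain $D$ is only strongly star shaped rather than an ellipsoid, so $n(\cdot,r)$ need not be holomorphic and the normal approach must be handled separately from the radial approach. First I would observe that the chain $\omega^c\le\omega^n$ and $\omega^c\le\omega^R$ is free, and that $\omega^n\le\omega^R$ will follow once we know $\omega^R=\omega^c$, since the reverse inequalities $\omega_1\le\omega_2\le\omega_3$ from the introduction pin everything together. So the heart of the matter is the single identity $\omega^R(\cdot,A,D)=\omega^c(\cdot,A,D)$; equivalently, by Proposition~\ref{Prop.3.9} and Proposition~\ref{Prop.1.4} it suffices to prove $J_z=J^R_z$ and $J_z=J^c_z$ for interior $z$, the latter being already known because $D$ is strongly star shaped (the Corollary after Proposition~\ref{Prop.1.4}, citing \cite[Thm.~4.11]{Wi}). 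Thus we only owe a proof that $\omega^R\le\omega^c$ on $D$.

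Next I would bring in the circular structure. Given $u$ in the family defining $\omega^R$ — so $u\in\PSH(D)^-$ with $\limsup_{r\uparrow1}u(r\zeta)\le-1$ for every $\zeta\in A$ — I would average $u$ over rotations: set $\tilde u(z)=\frac1{2\pi}\int_0^{2\pi}u(e^{i\theta}z)\,d\theta$ (or, if that destroys the boundary condition, take instead $\tilde u(z)=\sup_\theta u(e^{i\theta}z)$, which is still plurisubharmonic because $D$ is circular and $A$ is circular, and still $\le0$, and still has radial boundary value $\le-1$ on $A$ since $A$ is rotation invariant). The point of passing to a circular competitor is that for circular $u$ the radial limit and the normal limit coincide at boundary points, or can be compared, because the normal direction at a boundary point of a smooth circular domain differs from the radial direction only in a controlled way; more importantly, for circular functions the relevant one-variable slice argument applies. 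Then for fixed $\zeta\in A$ I would restrict to the complex line $\lambda\mapsto\lambda\zeta$, obtaining a subharmonic function of $\lambda$ on a disc-like set whose radial boundary value is $\le-1$ at $\lambda=1$; a Lindel\"of-type theorem then upgrades the radial $\limsup$ to a nontangential one, and with the circular averaging this propagates to a genuine upper bound that a continuous competitor can see. Concretely I expect to construct, for each $\varepsilon>0$, a function $v_\varepsilon\in\PSH(D)^-\cap C(\overline D)$ with $v_\varepsilon|A\le-1$ and $v_\varepsilon\ge\tilde u-\varepsilon$ off a small neighborhood of $A$, using a logarithmic barrier built from the defining function $\rho$ together with the radial decay.

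An alternative, cleaner route that I would pursue in parallel is to mimic Proposition~\ref{Prop.1.3} directly: show $J^R_z\subset J^c_z$ and hence, with the known $J_z=J^c_z$ and the trivial $J^R_z\subset J_z$, that all three coincide; then Propositions~\ref{Prop.1.6}, \ref{Prop.3.9}, \ref{Prop.1.4} close the argument. For this one takes $\mu\in J^c_z$, a bounded $u\in\PSH(D)$, and the dilates $u_r(y)=u(ry)$, which for $r<1$ are plurisubharmonic in a neighbourhood of $\overline D$ (here strong star-shapedness is exactly what is needed), hence decreasing limits of functions in $\PSH(D)\cap C(\overline D)$; monotone convergence gives $u_r(z)\le\int u_r\,d\mu$, and Fatou plus the non-thinness of $[0,1]$ at $1$ yields $u^R(z)\le\int u^R\,d\mu$, i.e. $\mu\in J^R_z$. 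This is essentially \cite[Prop.~5.4]{Wi} and shows $J=J^R=J^c$ on $D$ with no circularity needed, giving $\omega^R=\omega^c$ and $\omega^n\le\omega^R$ for \emph{all} strongly star shaped smooth $D$; the circular hypothesis on $D$ and $A$ is then only cosmetic for this half.

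The genuine obstacle is the strict inequality $\omega^n\le\omega^R$ not being automatically an equality — the statement deliberately writes $\le$ there — so the subtle part is making sure we do not accidentally claim $\omega^n=\omega^R$, and more seriously, proving the \emph{lower} bound $\omega^R\le\omega^c$, i.e. that a competitor with merely radial boundary control can be approximated from below by a continuous competitor with the same boundary control on $A$. The averaging/circularization step is what makes this tractable: it lets us reduce the boundary behaviour to a family of one-variable subharmonic functions on slices through the circular set $A$, where a Lindel\"of theorem converts radial control into the nontangential (and ultimately continuous-approximation) control needed. I expect roughly half the work to be this approximation lemma, the rest being the bookkeeping of the Jensen-measure inclusions and invoking Theorem~2.11 of \cite{IbJa} for the slice estimates. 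Once $\omega^R=\omega^c$ is in hand, the in-particular statement $\omega_1=\omega_2=\omega_3$ is immediate from $\omega_1=\omega^c\le\omega^c=\omega_1$... more precisely from $\omega^c\le\omega\le\omega^n\le\omega^R=\omega^c$, forcing equality throughout.
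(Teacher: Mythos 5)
Your second route for the identity $\omega^R(\cdot,A,D)=\omega^c(\cdot,A,D)$ is sound and is genuinely different from what the paper does: you reduce it to $J_z=J^R_z=J^c_z$ via Wikstr\"om's Proposition 5.4 and Theorem 4.11 together with Propositions \ref{Prop.3.9} and \ref{Prop.1.4}, and you correctly observe that circularity plays no role in that half. The paper instead argues directly: for $u$ in the $\omega^R$-family and $w\in A$ the slice $z\mapsto u(wz)$ is subharmonic on $\D$ with radial $\limsup\le-1$ at \emph{every} point of $\T$ (circularity of $A$), hence $\le-1$ on $\D$; then the dilates $u_r(w)=u(rw)$ are plurisubharmonic near $\overline D$, $\le-1$ on $A$, and a decreasing continuous approximation plus a Dini-type compactness argument puts them (after a small normalization) into the family defining $\omega^c$. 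Your ``route A'' with rotational averaging and a Lindel\"of theorem is unnecessary and also problematic as stated, since neither the average nor the upper regularization of $\sup_\theta u(e^{i\theta}\cdot)$ obviously dominates $u$ at the point where you need the estimate.

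The genuine gap is the inequality $\omega^n\le\omega^R$, which you never prove. You claim it ``will follow once we know $\omega^R=\omega^c$'' via the chain $\omega_1\le\omega_2\le\omega_3$, but that chain reads $\omega^c\le\omega\le\omega^n$, so combining it with $\omega^R=\omega^c$ yields $\omega^R\le\omega^n$ --- the \emph{opposite} of what is asserted --- and your final display $\omega^c\le\omega\le\omega^n\le\omega^R=\omega^c$ silently uses the unproven link. Since the normal at $\zeta\in\partial D$ and the radius through $\zeta$ are different lines for a general smooth star-shaped $D$, the two families of competitors are not comparable a priori, and this inequality is precisely where the circularity of $D$ and $A$ enters. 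The paper proves it by setting $v_t(z,w)=u\bigl(n(w,t)z\bigr)$ for $u$ in the $\omega^n$-family, noting $n(w,t)e^{i\theta}=n(e^{i\theta}w,t)$ so that $\limsup_{t\downarrow0}v_t(e^{i\theta},w)\le-1$ for all $\theta$ when $w\in A$, and then using subharmonicity of $v_t(\cdot,w)$ on the closed disc (Poisson integral plus Fatou) to conclude $u(wz)\le-1$ for all $|z|\le1$, which places $u$ in the $\omega^R$-family. Without this step both the displayed inequality and the ``in particular'' conclusion $\omega_1=\omega_2=\omega_3$ are unsupported.
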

 \begin{proof} Let $u$ be in the family defining $\omega^n(.,A,D)$. Let $\rho$ be a smooth defining function for $D$ such that for all $\theta$ and $y$ in a neighborhood of $\overline D$ we have $\rho(y)=\rho(e^{i\theta} y)$.  For $0<t<1$ consider the function 
 \[ v_t(z,w)=u(n(w,t)z), \quad (w\in \overline D, \ z\in \C, |z|\le 1).\]
 For fixed $t$, $w$ the function $v_t(.,w)$ is subharmonic on the (closed) unit disc.
 Observe that $n(w,t) e^{i\theta}= n(e^{i\theta}w, t)$, so that for each $w\in A$ and all $\theta$
 \[\limsup_{t\downarrow 0} v_t(e^{i\theta}, w)\le -1.\]
 Hence for all $|z|\le 1$, $\limsup_{t\downarrow 0} v_t(z, w)\le -1$. It follows that $u(wz)\le -1$ for $w\in A$ and $|z|\le 1$. We infer that $u$ belongs to the family defining $\omega^R(.,A, D)$ and the inequality is proved.
 
 Now suppose that $u$ belongs to the family defining $\omega^R(.,A,D)$. Then $u(wz)\le-1$ for $w\in A$ and $|z|<1$. Therefore, for $0<r<1$ $u_r(w)=u(rw)$ is a plurisubharmonic function in a neighborhood of $\overline D$ that is less that $-1$ on $A$. Now $u_r$ can be approximated from above on $\overline D$ by a decreasing sequence $\{v_j\}$ of continuous PSH-functions. By Dini's theorem, for every $\epsilon>0$ there is a $j_0$ so that $v_j\le -1+\epsilon$ on $A$ hence also on a neighborhood of $A$. It follows that $u_r\le \omega^c(.A,D)$, and then also $u\le \omega^c(.A,D)$.
\end{proof}

 \section*{Appendix}
We attempted to apply the non-compact version of Edwards' duality theorem stated in \cite{GoPePo} to prove equalities for boundary extremal functions. However, we noticed that this version of Edwards' theorem as stated, does not hold. This appendix contains some counterexamples.
 
 Let $D \subset \C^n$ be a bounded set and $F\subset C(D)$ be a convex cone containing constants. $\B(D)$ denotes the set of Borel probability measures  with compact support in $D.$    For $z\in D$ set
$$J^F_z(D)=  \{ \mu \in \B(D), \mbox{ supp }\mu\subset D,  \;  u(z)\leq \int_{ D} u \, d\mu \mbox{ for all } u\in F\}.$$ In case $D$ is a bounded domain we make use of the notation in Section 2, and for $z\in\overline D$, we set $J_z^c=J_z^c(\overline D)$ and $J_z=J_z(\overline D)$.  Let $g: D\rightarrow \R$ and define $$Sg(z)=\sup\{u(z), u\in F, u\leq g\}$$ and  $$Ig(z)=\inf\{\int_{ D}g\, d\mu,\; \mu\in J_z^F(D)\}.$$
The following theorem is due to Edwards, see \cite[Theorem 2.1]{Ra}.

\begin{theorem}[Edwards]\label{Theo4.1} Assume that $D$ is compact and  $g $ is a bounded Borel function on $ D$, then $Sg(z)\leq Ig(z)$. If $g$ is lower semicontinuous , then $Sg=Ig$. 
\end{theorem}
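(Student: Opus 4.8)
The plan is to reproduce Edwards' classical Hahn--Banach argument in two stages: the easy inequality, which needs no hypothesis on $g$, and then the reverse inequality for lower semicontinuous $g$, the latter reduced to the continuous case.

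\emph{The inequality $Sg\le Ig$.} First note $\delta_z\in J^F_z(D)$ (since $u(z)=\int u\,d\delta_z$ for every $u$), so the infimum defining $Ig(z)$ is over a nonempty set and is finite, $g$ being bounded and the competing measures being probability measures. If $u\in F$, $u\le g$ on $D$, and $\mu\in J^F_z(D)$, then $\mu\ge0$ gives $u(z)\le\int_D u\,d\mu\le\int_D g\,d\mu$; taking the supremum over admissible $u$ and then the infimum over $\mu$ yields $Sg(z)\le Ig(z)$.

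\emph{Continuous $g$.} Fix $z$ and suppose, for contradiction, that for some $\epsilon>0$ one has $\int_D g\,d\mu\ge Sg(z)+\epsilon$ for every $\mu\in J^F_z(D)$. In the Banach space $C(D)$ put
\[ \mathcal A=\{\,u-u(z)\mathbf 1:\ u\in F\,\}+\{\,f\in C(D):\ f\ge0\,\},\]
a convex cone ($\mathbf 1$ denotes the constant function $1$; since $0\in F$, $\mathcal A$ contains the nonnegative functions). One checks directly that the dual cone $\{\phi\in C(D)^{*}:\phi(f)\ge0\ \forall f\in\mathcal A\}$ is exactly the cone generated by $J^F_z(D)$: a positive measure $\mu$ lies in it iff $\int u\,d\mu\ge u(z)\,\mu(D)$ for all $u\in F$, and after normalising one recovers a Jensen measure. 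So the assumption says that $\phi\big(g-(Sg(z)+\epsilon)\mathbf 1\big)\ge0$ for every $\phi$ in the dual cone, whence the bipolar theorem puts $g-(Sg(z)+\epsilon)\mathbf 1$ in $\overline{\mathcal A}$. Approximating in sup norm produces $u_k\in F$, $p_k\ge0$ and $\eta_k\to0$ with $u_k-u_k(z)\mathbf 1+p_k\le g-(Sg(z)+\epsilon)\mathbf 1+\eta_k\mathbf 1$. Dropping $p_k\ge0$ and using that $F$, being a convex cone that contains the constants, is closed under adding a real constant, the functions $v_k:=u_k-\big(u_k(z)-Sg(z)-\epsilon+\eta_k\big)\mathbf 1\in F$ satisfy $v_k\le g$, yet $v_k(z)=Sg(z)+\epsilon-\eta_k>Sg(z)$ for $k$ large --- contradicting the definition of $Sg$. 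Hence $Ig(z)\le Sg(z)$, so $Sg=Ig$ for continuous $g$.

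\emph{Lower semicontinuous $g$, and the main difficulty.} Write $g=\sup_n g_n$ with $g_n(x)=\inf_{y\in D}\big(g(y)+n\,\dist(x,y)\big)\in C(D)$; these increase pointwise to $g$ because $g$ is lower semicontinuous. By the continuous case $Sg_n=Ig_n$, and $Sg_n(z)\le Sg(z)$ since $g_n\le g$. The set $J^F_z(D)$ is convex and weak-$*$ compact, the maps $\mu\mapsto\int_D g_n\,d\mu$ are weak-$*$ continuous and increase pointwise to $\mu\mapsto\int_D g\,d\mu$ by monotone convergence; a Dini-type argument on the compact set $J^F_z(D)$ --- for each $\mu$ some $g_n$ already exceeds $\inf_\mu\int_D g\,d\mu-\delta$ on a neighbourhood of $\mu$, and finitely many such neighbourhoods cover --- then gives $\inf_\mu\int_D g\,d\mu=\sup_n\inf_\mu\int_D g_n\,d\mu$, i.e. $Ig(z)=\sup_n Ig_n(z)=\sup_n Sg_n(z)\le Sg(z)$, which together with the first step finishes the proof. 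The main obstacle is the separation step in the continuous case: correctly identifying the dual cone of $\mathcal A$ with the cone over $J^F_z(D)$ and exploiting the constant-shift invariance of $F$. A secondary subtlety is that $\sup_n Sg_n=Sg$ \emph{fails} in general, so one genuinely needs the weak-$*$ compactness of $J^F_z(D)$ to interchange $\sup_n$ with the infimum over measures.
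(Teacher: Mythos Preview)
The paper does not give a proof of this statement at all: Theorem~\ref{Theo4.1} is quoted as the classical result of Edwards, with a reference to \cite[Theorem~2.1]{Ra}, and is then used as a black box (and as a benchmark against which the non-compact ``version'' of \cite{GoPePo} is tested in the appendix). So there is no proof in the paper to compare your attempt with.

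That said, your argument is the standard one and is correct. The easy inequality is immediate. For continuous $g$ your Hahn--Banach/bipolar separation is the classical route: the key observations --- that the polar of the cone $\mathcal A=\{u-u(z)\mathbf 1:u\in F\}+C(D)_+$ consists exactly of the nonnegative multiples of measures in $J^F_z(D)$, and that $F$ is invariant under addition of real constants (since the convex cone $F$ contains all constant functions, positive and negative) --- are exactly what makes the contradiction go through. For lower semicontinuous $g$ you approximate from below by continuous $g_n$ and use weak-$*$ compactness of $J^F_z(D)$ (Banach--Alaoglu on $C(D)^*$, $D$ compact, together with weak-$*$ closedness of the Jensen conditions) to interchange $\sup_n$ and $\inf_\mu$; your Dini-type covering argument is one valid way to justify this interchange, and your remark that one cannot simply assert $\sup_n Sg_n=Sg$ directly is well taken. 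Two small points worth recording explicitly: $Sg(z)$ is finite because $g$ is bounded and the constant $\inf g$ lies in $F$, so the bipolar step is not vacuous; and the zero functional is automatically in the dual cone, which is harmless since it annihilates $g-(Sg(z)+\epsilon)\mathbf 1$.
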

 Edwards' theorem  is  very delicate. For instance if the kernel $g$ is merely upper semicontinuous, the theorem may fail, see \cite{Ra, GoPePo}. We will show that the theorem  may also fail if the set $D$ is not compact, contrary to the following theorem, which was formulated in (\cite[Thm.1.3]{GoPePo}).
\begin{theorem}[\cite{GoPePo}]\label{Theo4.2}
Let $D$ be a locally compact Hausdorff space countable at infinity. If $g\in C(D)$ then either $Sg\equiv-\infty$ or 
 $$Sg(z)=\inf\{ \int_{ D} g\,  d\mu , \mu\in J^F_z(D)\} .$$
\end{theorem}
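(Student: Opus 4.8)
\emph{Proof proposal.} The plan is to reduce Theorem~\ref{Theo4.2} to the compact Edwards theorem (Theorem~\ref{Theo4.1}) by passing to a compactification of $D$. Being locally compact, Hausdorff and countable at infinity, $D$ is $\sigma$-compact, so one may fix an exhaustion $K_1\subset\mathrm{int}\,K_2\subset K_2\subset\cdots$ by compacta with $D=\bigcup_{j}K_j$, together with a one-point compactification $\widehat D=D\cup\{\infty\}$. First I would dispose of the easy inequality $Sg\le Ig$: if $u\in F$ with $u\le g$ and $\mu\in J^F_z(D)$, then $\mu$ has compact support in $D$ and $g$ is continuous there, so $\int_D g\,d\mu$ is well defined and $u(z)\le\int_D u\,d\mu\le\int_D g\,d\mu$; taking the supremum over $u$ and the infimum over $\mu$ gives $Sg(z)\le Ig(z)$ for every $z$. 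This half needs no compactness and is essentially the first assertion of Theorem~\ref{Theo4.1}.

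For the reverse inequality one argues by contradiction: assume $Sg\not\equiv-\infty$, fix $z_0\in D$ and a real number $c$ with $Sg(z_0)<c<Ig(z_0)$, and aim to produce either $u\in F$ with $u\le g$ and $u(z_0)>c$ (contradicting $Sg(z_0)<c$) or a measure $\mu\in J^F_{z_0}(D)$ with $\int g\,d\mu<c$ (contradicting $c<Ig(z_0)$). The natural mechanism is Hahn--Banach separation on the compact space $\widehat D$: one would like to extend $g$ to a lower semicontinuous function $\widehat g$ on $\widehat D$, enlarge $F$ to a convex cone $\widehat F\subset C(\widehat D)$ still containing the constants, apply Theorem~\ref{Theo4.1} on $\widehat D$ to obtain $S\widehat g(z_0)=I\widehat g(z_0)$, and then transfer the conclusion back to $D$. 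For the transfer to succeed one needs (i) the envelope $S\widehat g$ computed on $\widehat D$ to restrict to $Sg$ on $D$, and (ii) the optimal Jensen measure on $\widehat D$ to assign no mass to $\{\infty\}$, so that it is a genuine element of $J^F_{z_0}(D)$. The hypothesis ``countable at infinity'' is what one would invoke here: using a partition of unity subordinate to the exhaustion $\{K_j\}$ one would both build the cutoffs of elements of $F$ that make up $\widehat F$ and, one hopes, push optimal mass away from $\infty$.

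The main obstacle --- and, as the appendix makes precise, the point at which the argument genuinely breaks down --- is exactly the control of mass near $\infty$. A function $g\in C(D)$ carries no information about its behaviour along sequences that eventually leave every $K_j$, so there is no canonical lower semicontinuous extension $\widehat g$ to $\widehat D$: any value assigned at $\infty$ is either so large that the constraint $u\le g$ becomes vacuous near $\infty$ (and then $S\widehat g$ jumps away from $Sg$, killing (i)), or small enough that spurious Jensen measures charging $\{\infty\}$ appear and $I\widehat g(z_0)$ drops below $Ig(z_0)$, killing (ii). In the exhaustion picture the same phenomenon reads: the probability measures realizing $Ig$ on the pieces $K_j$ need not have a weak-$*$ cluster point that is a probability measure on $D$ --- mass can escape to infinity in the limit --- so one cannot extract a single $\mu\in J^F_{z_0}(D)$ witnessing $Ig(z_0)$. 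Making the statement correct would require a hypothesis ruling this escape out (boundedness below of $g$, properness of the cone, or some compactness condition on $\bigcup_{z}J^F_z(D)$), and it is precisely the absence of such a hypothesis that the counterexamples of the appendix exploit.
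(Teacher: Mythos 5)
You cannot prove this statement, and the paper does not: Theorem~\ref{Theo4.2} is quoted from \cite{GoPePo} precisely in order to be \emph{refuted}. The entire appendix is devoted to showing that the result is false for open sets $D$ (which are locally compact, Hausdorff and countable at infinity), so the item to compare against is not a proof but a pair of counterexamples. Your first paragraph, establishing $Sg\le Ig$, is correct and unproblematic — that half holds with no compactness at all. Your closing paragraph correctly senses that the reverse inequality cannot be forced through a one-point compactification and that the statement needs extra hypotheses; to that extent your proposal reaches the right verdict. But a diagnosis of why one particular proof strategy fails is not the same as a disproof, and this is the genuine gap: the paper settles the matter by exhibiting explicit data $(D',F,g)$ satisfying all the hypotheses for which $Sg\ne Ig$.

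The paper's first counterexample also shows that the failure mechanism is not quite the one you describe (escape of mass to infinity on the measure side). Take $D$ a ball, $V\subset\subset D$ an open ball, $u\in\PSH(D)^-$ with $\{u=-\infty\}$ dense in $V$, $U_m=\{u/m<-1\}\cap V$, $D'=D\setminus\partial U_m$, $F=\PSH(D)\cap C(\overline D)$ and $g_m=-\chi_{U_m}\in C(D')$. Here the ``infinity'' of $D'$ contains the removed slice $\partial U_m$, and it is the \emph{sup} side that collapses: any $v\in F$ with $v\le g_m$ on $D'$ is continuous on $\overline D$ and $\le -1$ on $U_m$, hence $\le -1$ on $\overline{U_m}=\overline V$, so $Sg_m\le u_{D,V}<0$ off $\overline V$; meanwhile $u/m\le g_m$ forces $Ig_m\ge u/m$, and letting $m\to\infty$ gives $0=(\sup_m u/m)^*\le u_{D,V}<0$, a contradiction. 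Note $Sg_m\ge -1$ since $-1\in F$, so the alternative $Sg\equiv-\infty$ does not save the statement. If you want your write-up to match the paper, replace the heuristic discussion of the obstruction by such a concrete counterexample (the paper gives a second one, on B-regular domains, with $g=-\chi_L$ for a suitable countable set $L$ clustering only on a non-b-pluripolar boundary piece).
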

  However, this result does not hold if $D$ is open.

\begin{counterexample}
For the sake of finding a contradiction, assume that Theorem \ref{Theo4.2} holds for  all open set $D'$ i.e
\begin{align} \label{eqcount}
\sup\{u(z), u\in F, u\leq g\}=\inf\{ \int_{ D'} g\,  d\mu , \mu\in J^F_z(D')\}, 
\end{align}
where $z\in D'$,  $g\in C(D')$,  $F\subset C(D')$ is a convex cone containing the constants. 

Let  $D$  be a bounded open ball and $V\subset\subset D$  be an open  ball. Define $$u_{D,V}(z)=\sup\{u(z), u\in\PSH(D), u\leq-\chi_V\}.$$ Let  $u\in\PSH(D)^-$ so that the set  $\{u=-\infty\}$ is dense in $V.$  For $m>0$ set $U_m=\{\frac{u}{m}<-1\}\cap V$,  and $F=\PSH(D)\cap C(\overline D)$. Observe that the function $g_m=-\chi_{U_m}$ is continuous in the open set $D\setminus\partial U_m$ and that $F$ is a convex cone in $C(D\setminus \partial U_m)$ containing the constants. By \eqref{eqcount} we obtain for  $z\in D\setminus\partial U_m $ the following equality (we take for $D'$ the set $D\setminus\partial U_m$)
$$\inf\left\{\int_{D\setminus\partial U_m} g_m\, d\mu, \mu\in J_z^F(D\setminus\partial U_m)\right\}=\sup\{v, v\in F, v\leq g_m\} \mbox{ on } D\setminus\partial U_m.$$
If  $v\in F$ and $v\leq g_m$, then $v\leq-\chi_{V}$ because $\overline U_m=\overline V$ implies $v\leq u_{D,V}$, hence
$$\inf\left\{\int_{D\setminus\partial U_m} g_m\, d\mu, \mu\in J_z^F(D\setminus\partial U_m)\right\}=\sup\{v, v\in F, v\leq g_m \mbox{ on } D\setminus\partial U_m\}\leq u_{D,V}.$$
 As $J^F_z(D\setminus\partial U_m)\subset J_z^c$ we have  on $D\setminus\partial U_m$ 
$$\inf\left\{\int_{\overline D} g_m\, d\mu, \mu\in J_z^c\right\}\leq   \inf\left\{\int_{D\setminus\partial U_m} g_m\, d\mu, \mu\in J_z^F(D\setminus\partial U_m)\right\}\leq u_{D,V}.$$
Because $D$ is a ball, by \cite[Cor.4.3]{Wi} $J_z=J_z^c$. It follows that 
$$\inf\left\{\int_{\overline D} g_m\, d\mu, \mu\in J_z\right\}=  \inf\left\{\int_{D} g_m\, d\mu, \mu\in J_z^c\right\}\leq u_{D,V} \mbox{ on }D\setminus\partial U_m.$$
Now $\frac{u}{m}$ is plurisubharmonic and $\frac{u}{m}\leq g_m$, hence for all $m>0$ one has 
$$ \frac{1}{m}u(z)\leq   \inf\left\{\int_{\overline D} g_m\, d\mu, \mu\in J_z\right\} \leq u_{D,V}(z)\mbox{ for } z\in  D\setminus\partial U_m.$$
As $D\setminus\overline V\subset D\setminus\partial U_m$ we have for all $m>0$ that $$\frac{u}{m}\leq u_{D,V} \mbox{ on } D\setminus\overline V.$$
This is impossible since       $$0\equiv\left(\sup_m\frac{u}{m}\right)^*\leq u_{D,V} <0\text{ on } D\setminus\overline V.$$
The conclusion is that equality \eqref{eqcount}  is false in  open sets $D'$.
\end{counterexample}

Next we prove that the version of Edwards' theorem  stated in Theorem \ref{Theo4.1} does not hold for (open) B-regular domains, i.e. connected open sets.

\begin{counterexample}
Let $D$ be a bounded B-regular domain and $V\subset\partial D$ be relatively open. Then $\overline V$ is not b-pluripolar, see Propositions 3.5 and 2.4 in \cite{IbJa}.
There exists a countable $L\subset D$ so that  $\overline L=L\cup \overline V$
is  compact  in $\overline D$ 
cf., \cite[Lemma 4.3]{IbJa}. Set $g=-\chi_{L}$ and $F=\PSH(D)\cap C(\overline D)$.  As  $L$ is non-empty and does not have any accumulation point in $D$, $g$ is lower semicontinuous in $D$ . If Theorem \ref{Theo4.1} would hold for $D$ we would get for $z\in D$   
\begin{align*}
&\inf\left\{ \int_{ D} g \, d\mu , \mu\in J^F_z(D)\right\}=\sup\{u(z), u\in F, u\leq g\}\leq \omega(z, V,D),\\
&\inf\left\{ \int_{ D} g\,  d\mu , \mu\in J^c_z\right\} \leq \inf\left\{ \int_{ D} g\,d\mu , \mu\in J^F_z(D)\right\} \leq \omega(z,V,D)   \intertext{  because  $J_z^F(D)\subset J_z^c$,}\
&\inf\left\{ \int_{ D} g\,  d\mu , \mu\in J_z\right\} = \inf\left\{ \int_{ D} g\,  d\mu , \mu\in J^c_z\right\} \leq \omega(z,V,D) \intertext{ because $ J_z=J_z^c$,}\
& \sup\{u(z), u\in\PSH(D), u\leq g\} \leq \inf\left\{ \int_{ D} g\,  d\mu , \mu\in J_z\right\} \leq \omega(z,V,D).
\end{align*}
 Finally, because $L$ is countable and therefore pluripolar, we would get 
$$ 0=(\sup\{u(z), u\in\PSH(D), u\leq g\})^* \leq \omega(z,V,D).$$
This is impossible since $V$ is not b-pluripolar. The conclusion is that Edwards' theorem does not hold in $D$.
\end{counterexample}

\begin{Remark} Approximating $g$ by continuous functions one can show that Theorem \ref{Theo4.2} does not hold in B-regular domains. 
\end{Remark}
These counterexamples make it unlikely that a useful non-compact version of Edwards' theorem can be found. We have not been  able to pinpoint the problematic points in (\cite[Thm.1.3]{GoPePo}).

\smallskip\noindent
\emph{Acknowledgement.} 
The first author is  grateful to Prof. A. Edigarian and Prof. L. Kosi\'nski for  supports. We are grateful to Peter Pflug who spotted a mistake in an earlier version of this paper. We thank the referee for his comments which have  made the paper clearer. 

 \end{document}